\begin{document}
\setcounter{page}{0}

\newcommand{\eps}{\varepsilon}
\newcommand{\set}[1]{\left\{#1\right\}}
\newcommand{\abs}[1]{\left|#1\right|}
\newcommand{\p}{\partial}
\newcommand{\mB}{\mathbf{B}}
\newcommand{\mE}{\mathbf{E}}
\newcommand{\mP}{\mathbf{P}}
\newcommand{\mU}{\mathbf{U}}
\newcommand{\mV}{\mathbf{V}}
\newcommand{\mW}{\mathbf{W}}
\newcommand{\mc}{\mathbf{c}}
\newcommand{\md}{\mathbf{d}}
\newcommand{\me}{\mathbf{e}}
\newcommand{\mf}{\mathbf{f}}
\newcommand{\mg}{\mathbf{g}}
\newcommand{\mn}{\mathbf{n}}
\newcommand{\mt}{\mathbf{t}}
\newcommand{\mx}{\mathbf{x}}
\newcommand{\my}{\mathbf{y}}
\newcommand{\mz}{\mathbf{z}}
\newcommand{\vg}{\boldsymbol{\gamma}}
\newcommand{\vn}{\boldsymbol{\nu}}
\newcommand{\vp}{\boldsymbol{\phi}}
\newcommand{\vt}{\boldsymbol{\theta}}
\newcommand{\vv}{\boldsymbol{\vartheta}}
\newcommand{\vx}{\boldsymbol{\xi}}

\newtheorem{thm}{Theorem}[section]
\newtheorem{cor}[thm]{Corollary}
\newtheorem{lem}[thm]{Lemma}
\newtheorem{prop}[thm]{Proposition}
\newtheorem{defn}[thm]{Definition}
\newtheorem{rem}[thm]{Remark}

\title[]{Detection of small electromagnetic inhomogeneities with inaccurate frequency}
\author{Won-Kwang \surname{Park}}
\email{parkwk@kookmin.ac.kr}
\thanks{Fax: +82-2-910-4739}
\affiliation{Department of Mathematics, Kookmin University, Seoul
02707}

\date[]{Received 6 August 2007}

\begin{abstract}
Generally, in the application of subspace migration for detecting locations of small inhomogeneities, one begins reconstruction procedure with \textit{a priori} information of applied frequency. However, mathematical theory of subspace migration has not been developed satisfactorily  when applied frequency is unknown. In this paper, we identify mathematical structure of subspace migration imaging function for finding locations of small inhomogeneities in two-dimensional homogeneous space by establishing a relationship with Bessel functions of integer order zero and one of the first kind. This expression indicates the reason behind the appearance of inaccurate locations. Numerical simulations are performed to support our analysis.
\end{abstract}

\pacs{02.30.Zz, 02.60.Cb}

\keywords{Small inhomogeneities, subspace migration, inaccurate frequency, Bessel function, numerical results}

\maketitle

\section{INTRODUCTION}
Generally, one of the purpose of inverse scattering problem is to identify locations of small electromagnetic inhomogeneities from measured scattered field or far-field pattern. This problem is known as a difficult problem due to the its nonlinearity and ill-posedness but still interesting and challengeable problem because this arises in Mathematics, Physics, Medical imaging, Engineering sciences, etc, highly related to the modern life. Related works can be found in \cite{A3,ABF,FKM,KDAK,SKLKLJC} and references therein.

Motivated this, various algorithms for solving inverse scattering problem have been developed. Following various researches \cite{AK2,BL,BK,B3,CGHIR,DL,GK,IW}, most of which are based on the least-square method so, for guarantee a successful performance, \textit{a priori} information of unknown inhomogeneities, appropriate regularization terms highly depends on the specific problems, calculation of complex Fr\'echet (or domain) derivative must be considered beforehand. If any one of these conditions is not fulfilled, serious problems such as non-convergence, the local minimizer problem, and a considerable increase in the computational costs due to the large number of iteration procedures will arise.

For an alternative, fast identification algorithms have been developed. Among them, single- and multi-frequency Kirchhoff and subspace migration have shown their feasibilities in detection of small inhomogeneities for full- and limited-view inverse scattering problems, refer to \cite{AGKPS,P-SUB3,PL2}. However, exact value of the applied frequency must be known in order to detect locations of inhomogeneities accurately. If not, it would only be possible to recognize the existence of inhomogeneities, i.e,. identification of exact locations of inhomogeneities is impossible. Throughout various simulation results, this fact has been examined (see \cite{LVB}) and recently, related mathematical theory of Multiple Signal Classification (MUSIC) algorithm for detecting small electromagnetic inhomogeneities has been concerned (see \cite{SRACM}); however, reliable mathematical theory has not yet been developed satisfactorily.

In this paper, we carefully analyze subspace migration imaging function with inaccurate frequency by establishing a relationship with Bessel functions of order zero and one of the first kind. This is based on the asymptotic expansion formula in the presence of a set of electromagnetic inhomogeneities with small diameter and the structure of singular vectors associate with the nonzero singular values of the so-called Multi-Static Response (MSR) matrix collected from the far-field pattern. The identified relationship explains why the subspace migration yields inaccurate locations of small inhomogeneities with inaccurate frequency.

Remaining parts of this paper is organized as follows. In Section \ref{sec:2}, we introduce the direct scattering problem, far-field pattern, and asymptotic expansion formula. In Section \ref{sec:3}, the subspace migration algorithm for detection of small inhomogeneities is surveyed. In Section \ref{sec:4}, we establish a relationship between subspace migration imaging function and Bessel functions, and investigate the cause of inaccurate results. In Section \ref{sec:5}, the results of the numerical simulations are exhibited in support of our analysis. A short conclusion follows in Section \ref{sec:6}.

\section{Two-dimensional direct scattering problem}\label{sec:2}
Let $\Sigma_m$ be a homogeneous inclusion with a small diameter $r$ in the two-dimensional space $\mathbb{R}^2$. Throughout this paper, we assume totally $M$ different small inhomogeneities $\Sigma_m$ exist in $\mathbb{R}^2$ such that
\[\Sigma_m=\mz_m+r\mathbf{B}_m,\]
where $\mz_m$ denote the location of $\Sigma_m$ and $\mathbf{B}_m$ is a simply connected smooth domain containing the origin. For the sake, we assume that $\mathbf{B}$ is a unit circle and $\Sigma_m$ are separated from each other.

Let $\omega=2\pi/\lambda$ be a given positive angular frequency with $\lambda$ denotes given wavelength. Throughout this paper $\omega$ is sufficiently large enough and satisfying following condition
\begin{equation}\label{Separated}
\omega|\mz_m-\mz_{m'}|\gg0.75\quad\mbox{and}\quad\lambda>2r
\end{equation}
for all $m,m'=1,2,\cdots,M$ with $m\ne m'$.

Throughout this paper, we denote $\eps_0$ and $\mu_0$ be the dielectric permittivity and magnetic permeability of $\mathbb{R}^2$, respectively. Similarly, we let $\eps_m$ and $\mu_m$ be those of $\Sigma_m$. For simplicity, let $\Sigma$ be the collection of $\Sigma_m$, $m=1,2,\cdots,M,$ and we define the following piecewise constants:
\[\eps(\mx)=\left\{\begin{array}{ccl}
\medskip\eps_m&\mbox{for}&\mx\in\Sigma_m\\
\eps_0&\mbox{for}&\mx\in\mathbb{R}^2\backslash\overline{\Sigma}
\end{array}\right.
\quad\mbox{and}\quad
\mu(\mx)=\left\{\begin{array}{ccl}
\medskip\mu_m&\mbox{for}&\mx\in\Sigma_m\\
\mu_0&\mbox{for}&\mx\in\mathbb{R}^2\backslash\overline{\Sigma}.
\end{array}\right.\]
In this paper, we assume that $\eps_0=\mu_0=1$, $\eps_m\ne\eps_0$, and $\mu_m\ne\mu_0$ for the sake of simplicity.

At a given positive angular frequency $\omega$, let $u(\mx)$ be the time-harmonic total field that satisfies the Helmholtz equation
\begin{equation}\label{HelmHoltzEquation}
  \nabla\cdot\bigg(\frac{1}{\mu(\mx)}\nabla u(\mx)\bigg)+\omega^2\eps(\mx)u(\mx)=0
\end{equation}
with transmission conditions on $\Sigma_m$ for all $m$.

Let $u_0(\mx)$ be the solution of (\ref{HelmHoltzEquation}) without $\Sigma$. In this paper, we consider the following plane-wave illumination: for a vector $\vt\in\mathbb{S}^1$, $u_0(\mx)=e^{i\omega\vt\cdot\mx}$. Here, $\mathbb{S}^1$ denotes a two-dimensional unit circle and throughout this paper, we assume that the set $\set{\vt_n:n=1,2,\cdots,N}$ spans $\mathbb{S}^1$.

Generally, the total field $u(\mx)$ can be divided into the incident field $u_0(\mx)$ and the unknown scattered field $u_{\mathrm{S}}(\mx)$, which satisfies the Sommerfeld radiation condition
\[\lim_{|\mx|\to\infty}\sqrt{|\mx|}\left(\frac{\p u_{\mathrm{S}}(\mx)}{\p|\mx|}-ik_0u_{\mathrm{S}}(\mx)\right)=0,\quad k_0=\omega\sqrt{\eps_0\mu_0}\]
uniformly in all directions $\hat{\mx}=\frac{\mx}{|\mx|}$. Notice that we assumed $\eps_0=\mu_0=1$ so, we set $k_0=\omega$ from now on. As given in \cite{AK2}, $u_{\mathrm{S}}$ can be written as the following asymptotic expansion formula in terms of $r$
\begin{multline}\label{ScatteredField}
  u_{\mathrm{S}}(\mx)=r^2\sum_{m=1}^{M}\bigg(\nabla u_0(\mx)(\mz_m)\cdot\mathbb{A}(\mz_m)\cdot\nabla\Phi(\mz_m,\mx)\\+\omega^2(\eps-\eps_0)u_0(\mx)(\mz_m)\Phi(\mz_m,\mx)\bigg)+o(r^2),
\end{multline}
where $o(r^2)$ is uniform in $\mz_m\in\Sigma_m$ and $\vt\in\mathbb{S}^1$, $\mathbb{A}(\mz_m)$ is a $2\times2$ symmetric matrix defined as
\begin{align*}
\mathbb{A}(\mz_m)=\frac{2\mu_0}{\mu_m+\mu_0}\left[\begin{array}{cc}1&0\\0&1\end{array}\right]\mbox{area}(\mathbf{B}_m),
\end{align*}
and $\Phi(\mz_m,\mx)$ is the two-dimensional time harmonic Green function (or fundamental solution to Helmholtz equation)
\[\Phi(\mz_m,\mx)=-\mu_0\frac{i}{4}H_0^1(\omega|\mz_m-\mx|).\]
Here, $H_0^1$ is the Hankel function of order zero and of the first kind.

The far-field pattern is defined as function $u_\infty(\hat{\mx},\vt)$ that satisfies
\begin{equation}\label{FarField}
  u_{\mathrm{S}}(\mx)=\frac{e^{i\omega|\mx|}}{\sqrt{|\mx|}}u_\infty(\hat{\mx},\vt)+o\left(\frac{1}{\sqrt{|\mx|}}\right)
\end{equation}
as $|\mx|\longrightarrow\infty$ uniformly on $\hat{\mx}=\frac{\mx}{|\mx|}$.

\section{Introduction to single-frequency subspace migration}\label{sec:3}
Subspace migration algorithm for identifying locations of small defects introduced in \cite{AGKPS} used the structure of a singular vector of the Multi-Static Response (MSR) matrix $\mathbb{M}=[M_{jl}]_{j,l=1}^{N}=[u_\infty(\vv_j,\vt_l)]_{j,l=1}^{N}$. Note that by combining (\ref{ScatteredField}), (\ref{FarField}), and the asymptotic behavior of the Hankel function,
\[H_0^1(\omega|\mz_m-\mx|)=\frac{e^{i\frac{\pi}{4}}}{\sqrt{8\omega\pi}}\frac{e^{i\omega|\mx|}}{\sqrt{|\mx|}}e^{-i\omega\frac{\mx}{|\mx|}\cdot\mz_m}+o\left(\frac{1}{\sqrt{|\mx|}}\right),\]
the far-field pattern $u_\infty(\vv_j,\vt_l)$ with observation number $j$ and incident wave number $l$ can be represented as
\begin{equation}\label{AFS}
  u_\infty(\vv_j,\vt_l)\approx r^2\omega^2\frac{e^{i\frac{\pi}{4}}}{\sqrt{8\omega\pi}}\sum_{m=1}^{M}\left(\frac{\eps_m-\eps_0}{\sqrt{\eps_0\mu_0}}\mbox{area}(\mathbf{B}_m)-\vv_j\cdot\mathbb{A}(\mz_m)\cdot\vt_l\right)e^{i\omega(\vt_l-\vv_j)\cdot\mz_m}.
\end{equation}

Now, let us assume that the incident and observation direction configurations are same, i.e., for each $\vv_j=-\vt_j$. Then,
\[M_{jl}\approx r^2\omega^2\frac{e^{i\frac{\pi}{4}}}{\sqrt{8\omega\pi}}\sum_{m=1}^M\bigg[\frac{\eps_m-\eps_0}{\sqrt{\eps_0\mu_0}}+\frac{2\mu_0}{\mu_m+\mu_0}\vt_j\cdot\vt_l\bigg]\mbox{area}(\mathbf{B}_m)e^{i\omega(\vt_j+\vt_l)\cdot\mz_m}.\]
Then, based on the above observation, we can decompose $\mathbb{M}$ as follows:
\begin{equation}\label{Decomposition}
\mathbb{M}=\mathbb{EFE}^T=r^2\omega^2N\frac{e^{i\frac{\pi}{4}}}{\sqrt{8\omega\pi}}\sum_{m=1}^{M}\mathbb{E}_m(\omega)\left[\begin{array}{ccc}
\displaystyle\frac{\eps_m-\eps_0}{\sqrt{\eps_0\mu_0}}&0&0\\
0&\displaystyle\frac{2\mu_0}{\mu_m+\mu_0}&0\\
0&0&\displaystyle\frac{2\mu_0}{\mu_m+\mu_0}\end{array}\right]\mathbb{E}_m(\omega)^T,
\end{equation}
where
\begin{align}
\begin{aligned}\label{ED}
  \mathbb{E}_m(\omega)&=\frac{1}{\sqrt{N}}\bigg[\mathbf{E}_1(\mz_m;\omega),\mathbf{E}_2(\mz_m;\omega),\mathbf{E}_3(\mz_m;\omega)\bigg]\\
    &=\frac{1}{\sqrt{N}}\left[\begin{array}{ccc}
  e^{i\omega\vt_1\cdot\mz_m}&(\vt_1\cdot\me_1)e^{i\omega\vt_1\cdot\mz_m}&(\vt_1\cdot\me_2)e^{i\omega\vt_1\cdot\mz_m}\\
  e^{i\omega\vt_2\cdot\mz_m}&(\vt_2\cdot\me_1)e^{i\omega\vt_2\cdot\mz_m}&(\vt_2\cdot\me_2)e^{i\omega\vt_1\cdot\mz_m}\\
  \vdots&\vdots&\vdots\\
  e^{i\omega\vt_N\cdot\mz_m}&(\vt_N\cdot\me_1)e^{i\omega\vt_N\cdot\mz_m}&(\vt_N\cdot\me_2)e^{i\omega\vt_1\cdot\mz_m}\end{array}\right]
\end{aligned}
\end{align}
and $\me_1=[1,0]^T$, $\me_2=[0,1]^T$. This decomposition leads us to introduce subspace migration for detecting locations of $\Sigma_m$ as follows. First, let us perform the Singular Value Decomposition (SVD) as follows:
\begin{equation}\label{SVD}
  \mathbb{M}=\mathbb{U}\Lambda\mathbb{\overline{V}}^T=\sum_{m=1}^{N}\sigma_m\mathbf{U}_m\overline{\mathbf{V}}_m^T\approx\sum_{m=1}^{3M}\sigma_m\mathbf{U}_m\overline{\mathbf{V}}_m^T,
\end{equation}
where $\mathbf{U}_m$ and $\mathbf{V}_m$ are the left and right singular vectors of $\mathbb{M}$, respectively. Then, by comparing (\ref{Decomposition}) and (\ref{SVD}), we introduce the following test vector
\begin{equation}\label{VecW}
  \mW(\mx;\omega):=\bigg[\mc_1\cdot[1,\vt_1]^Te^{i\omega\vt_1\cdot\mx},\mc_3\cdot[1,\vt_2]^Te^{i\omega\vt_2\cdot\mx}\cdots,\mc_n\cdot[1,\vt_N]^Te^{i\omega\vt_N\cdot\mx}\bigg]^T.
\end{equation}
Notice that since $\mU_m$ and $\mV_m$ are unit vectors, we introduce following unit vector
\[\hat{\mW}(\mx;\omega)=\frac{\mW(\mx;\omega)}{|\mW(\mx;\omega)|},\]
where $\mc_n\in\mathbb{R}^3\backslash\{\mathbf{0}\}$. Then, based on the orthonormal property of singular vectors, we can easily observe that for a proper choice of $\mc_n$,
\begin{align*}
&\langle\hat{\mW}(\mx;\omega),\mU_m\rangle\approx1\quad\mbox{and}\quad\langle\hat{\mW}(\mx;\omega),\overline{\mV}_m\rangle\approx1\quad\mbox{if}\quad\mx=\mz_m\\
&\langle\hat{\mW}(\mx;\omega),\mU_m\rangle\approx0\quad\mbox{and}\quad\langle\hat{\mW}(\mx;\omega),\overline{\mV}_m\rangle\approx0\quad\mbox{if}\quad\mx\ne\mz_m,
\end{align*}
where $\langle\mathbf{a},\mathbf{b}\rangle:=\overline{\mathbf{a}}\cdot\mathbf{b}$. Correspondingly, we can introduce following imaging function:
\begin{equation}\label{ImagingFunction}
  \mathbb{W}(\mz;\omega):=\left|\sum_{m=1}^{3M}\langle\hat{\mW}(\mx;\omega),\mU_m\rangle\langle\hat{\mW}(\mx;\omega),\overline{\mV}_m\rangle\right|.
\end{equation}
The value of $\mathbb{W}(\mz;\omega)$ will be close to $1$ at $\mx=\mz_m\in\Sigma_m$ and close to $0$ at $\mx\in\mathbb{R}^2\backslash\overline{\Sigma}$. This is the subspace migration algorithm for identifying locations of small inhomogeneities.

\section{Subspace migration with application of inaccurate frequency}\label{sec:4}
Based on the observation discussed in Section \ref{sec:3}, accurate value of $\omega$ must be applied to (\ref{VecW}) for identifying locations of $\Sigma_m$ accurately. If not, it has been heuristically confirmed that inaccurate locations of $\Sigma_m$ are detected via subspace migration. From now on, we analyze structure of (\ref{ImagingFunction}) and explain why this phenomenon occurs. Before starting, we assume that $\omega$ is unknown so, a fixed positive value $\eta$ applied to (\ref{VecW}) instead of $\omega$. Since, we have no \textit{a priori} information of targets, we cannot select optimal vectors $\mc_n$ in (\ref{VecW}). So, motivated from recent works \cite{P-SUB3}, we apply an unit vector $\mW(\mx;\eta)$ instead of (\ref{VecW}) such that
\[\mW(\mx;\eta)=\frac{1}{\sqrt{N}}\bigg[e^{i\eta\vt_1\cdot\mx},e^{i\eta\vt_2\cdot\mx}\cdots,e^{i\eta\vt_N\cdot\mx}\bigg]^T\]
and consider the following imaging function:
\begin{equation}\label{ImagingFunction2}
  \mathbb{W}(\mx;\eta):=\left|\sum_{m=1}^{3M}\langle\mW(\mx;\eta),\mU_m\rangle\langle\mW(\mx;\eta),\overline{\mV}_m\rangle\right|.
\end{equation}

For identifying mathematical structure of (\ref{ImagingFunction2}), we introduce following useful result.
\begin{lem}\label{Bessel}
  Let $\vx\in\mathbb{R}^2$ and $\vt_n\in\mathbb{S}^1$, $n=1,2,\cdots,N$. Then for sufficiently large $N$, the following relations hold:
  \begin{align*}
    &\frac{1}{N}\sum_{n=1}^{N}e^{i\omega\vt_n\cdot\mx}=\frac{1}{2\pi}\int_{\mathbb{S}^1}e^{i\omega\vt\cdot\mx}d\vt=J_0(\omega|\mx|),\\
    &\frac{1}{N}\sum_{n=1}^{N}(\vt_n\cdot\vx)e^{i\omega\vt_n\cdot\mx}=\frac{1}{2\pi}\int_{\mathbb{S}^1}(\vt\cdot\vx)e^{i\omega\vt\cdot\mx}d\vt=i\left(\frac{\mx}{|\mx|}\cdot\vx\right)J_1(\omega|\mx|),
  \end{align*}
  where $J_n(\cdot)$ denotes the Bessel function of integer order $n$ of the first kind.
\end{lem}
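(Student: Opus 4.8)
The plan is to treat each stated identity as two separate equalities: the left equality is a discrete-to-continuous (quadrature) approximation, while the right equality is the analytic identification with a Bessel function. For the quadrature step, I would invoke the fact that, for a continuous integrand on $\mathbb{S}^1$ and asymptotically equidistributed directions $\vt_n$, the normalized sum $\frac{1}{N}\sum_{n=1}^{N}f(\vt_n)$ converges to $\frac{1}{2\pi}\int_{\mathbb{S}^1}f(\vt)\,d\vt$ as $N\to\infty$, with an error controlled by the modulus of continuity of $f$. Applying this to the smooth integrands $f(\vt)=e^{i\omega\vt\cdot\mx}$ and $f(\vt)=(\vt\cdot\vx)e^{i\omega\vt\cdot\mx}$ yields the two left equalities for sufficiently large $N$.

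For the right equality of the first identity, I would parametrize $\vt=(\cos\theta,\sin\theta)$ with $\theta\in[0,2\pi)$ and write $\mx=|\mx|(\cos\phi,\sin\phi)$, so that $\vt\cdot\mx=|\mx|\cos(\theta-\phi)$. Substituting $\psi=\theta-\phi$ and using $2\pi$-periodicity removes the dependence on $\phi$, leaving $\frac{1}{2\pi}\int_0^{2\pi}e^{i\omega|\mx|\cos\psi}\,d\psi$, which is exactly the classical integral representation $J_0(z)=\frac{1}{2\pi}\int_0^{2\pi}e^{iz\cos\psi}\,d\psi$ evaluated at $z=\omega|\mx|$.

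For the right equality of the second identity, rather than compute a new integral directly, I would reduce it to the first by differentiation under the integral sign. Since $\nabla_\mx e^{i\omega\vt\cdot\mx}=i\omega\,\vt\,e^{i\omega\vt\cdot\mx}$, we have $\frac{1}{2\pi}\int_{\mathbb{S}^1}\vt\,e^{i\omega\vt\cdot\mx}\,d\vt=\frac{1}{i\omega}\nabla_\mx J_0(\omega|\mx|)$. Using $\nabla_\mx|\mx|=\mx/|\mx|$ together with the Bessel derivative identity $J_0'(z)=-J_1(z)$, the right-hand side simplifies to $i\,(\mx/|\mx|)\,J_1(\omega|\mx|)$. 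Taking the Euclidean inner product with the fixed vector $\vx$ then produces $i\big(\tfrac{\mx}{|\mx|}\cdot\vx\big)J_1(\omega|\mx|)$, as claimed.

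The main obstacle I anticipate is not the Bessel computations, which are routine once the parametrization and the identity $J_0'=-J_1$ are in hand, but making the sum-to-integral passage rigorous. The statement only assumes that $\set{\vt_n}$ spans $\mathbb{S}^1$, whereas the Riemann-sum approximation genuinely requires the directions to become asymptotically uniformly distributed, together with an explicit quadrature error estimate vanishing as $N\to\infty$. Sharpening this hypothesis, most naturally by choosing the equi-spaced directions $\vt_n=(\cos\tfrac{2\pi n}{N},\sin\tfrac{2\pi n}{N})$, and quantifying the associated error is where the real care is needed.
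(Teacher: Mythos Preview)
The paper does not actually prove this lemma: it is introduced only as a ``useful result'' and stated without argument, then used as a black box in the proof of Theorem~\ref{TheoremFrequency}. So there is no paper proof to compare against.

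Your proposed argument is correct and is the standard route: the parametrization $\vt\cdot\mx=|\mx|\cos(\theta-\phi)$ together with the integral representation of $J_0$ handles the first identity, and differentiating $J_0(\omega|\mx|)$ with respect to $\mx$ using $J_0'=-J_1$ cleanly yields the second. Your observation about the hypothesis is also well taken: the paper's standing assumption that $\{\vt_n\}$ merely spans $\mathbb{S}^1$ is not enough to pass from the discrete average to the integral; one needs asymptotic equidistribution, and indeed in Section~\ref{sec:5} the paper tacitly adopts the equispaced choice $\vt_l=[\cos(2\pi l/N),\sin(2\pi l/N)]^T$, under which the trapezoidal-rule error for these smooth periodic integrands decays rapidly in $N$.
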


Now, we state the main result of this paper.
\begin{thm}\label{TheoremFrequency}
  Assume that total number of incident and observation directions $N$ is sufficiently large enough. Then, $\mathbb{W}(\mx;\eta)$ can be represented as follows:
  \begin{equation}\label{StructureImagingfunction}
  \mathbb{W}(\mx;\eta)=\left|\sum_{m=1}^{M}\left\{J_0(|\omega\mz_m-\eta\mx|)^2-\sum_{s=1}^{2}\left(\frac{\omega\mz_m-\eta\mx}{|\omega\mz_m-\eta\mx|}\cdot\me_s\right)^2J_1(|\omega\mz_m-\eta\mx|)^2\right\}\right|.
  \end{equation}
\end{thm}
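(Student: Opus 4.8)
The plan is to collapse the imaging function (\ref{ImagingFunction2}) into an explicit sum over the source points by replacing the singular vectors $\mU_m,\overline{\mV}_m$ with the normalized columns of the matrices $\mathbb{E}_m(\omega)$ from the factorization (\ref{Decomposition}), and then to evaluate the resulting inner products with Lemma \ref{Bessel}. First I would use the fact that each diagonal block in (\ref{Decomposition}) is real, so $\mathbb{M}$ is \emph{complex symmetric}, $\mathbb{M}^T=\mathbb{M}$. Takagi's factorization of a complex symmetric matrix allows the SVD to be chosen with $\mV_m=\overline{\mU}_m$, i.e. $\overline{\mV}_m=\mU_m$; the two inner products in (\ref{ImagingFunction2}) then coincide and
\[
\mathbb{W}(\mx;\eta)=\left|\sum_{m=1}^{3M}\langle\mW(\mx;\eta),\mU_m\rangle^2\right|.
\]

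Next I would identify the $3M$ significant left singular vectors. By (\ref{Decomposition}) the range of $\mathbb{M}$ lies in the span of the $3M$ vectors $\{\mE_1(\mz_m;\omega),\mE_2(\mz_m;\omega),\mE_3(\mz_m;\omega):m=1,\dots,M\}$, i.e. the columns of the $\mathbb{E}_m(\omega)$. Using Lemma \ref{Bessel}, the Gram matrix of these columns is, for large $N$, block diagonal in $m$ up to errors governed by $J_0(\omega|\mz_m-\mz_{m'}|)$ and $J_1(\omega|\mz_m-\mz_{m'}|)$; the separation hypothesis (\ref{Separated}) forces these Bessel values to be small, so columns attached to distinct $\mz_m$ are asymptotically orthogonal. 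Within a single $m$ the three columns are mutually orthogonal as well in the large-$N$ limit (the $\mE_1$--$\mE_2$ and $\mE_1$--$\mE_3$ averages vanish by the second identity of Lemma \ref{Bessel} at zero argument since $J_1(0)=0$, and the $\mE_2$--$\mE_3$ average vanishes by the analogous second-moment integral over $\mathbb{S}^1$). Hence, relabelling the index $m=1,\dots,3M$ as a pair $(m,j)$ with $m=1,\dots,M$ and $j=1,2,3$, I take the significant singular vectors to be the columns $\tfrac{1}{\sqrt N}\mE_j(\mz_m;\omega)$ of $\mathbb{E}_m(\omega)$, so that the sum splits as $\sum_{m=1}^{M}\sum_{j=1}^{3}\langle\mW(\mx;\eta),\tfrac1{\sqrt N}\mE_j(\mz_m;\omega)\rangle^2$.

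The final step is a direct computation. Writing $\langle\mW(\mx;\eta),\tfrac1{\sqrt N}\mE_1(\mz_m;\omega)\rangle=\tfrac1N\sum_{n=1}^N e^{i\vt_n\cdot(\omega\mz_m-\eta\mx)}$ and applying the first identity of Lemma \ref{Bessel} with the vector $\omega\mz_m-\eta\mx$ gives $J_0(|\omega\mz_m-\eta\mx|)$. Likewise, for $s=1,2$, one has $\langle\mW(\mx;\eta),\tfrac1{\sqrt N}\mE_{s+1}(\mz_m;\omega)\rangle=\tfrac1N\sum_{n=1}^N(\vt_n\cdot\me_s)e^{i\vt_n\cdot(\omega\mz_m-\eta\mx)}$, which by the second identity of Lemma \ref{Bessel} equals $i\big(\tfrac{\omega\mz_m-\eta\mx}{|\omega\mz_m-\eta\mx|}\cdot\me_s\big)J_1(|\omega\mz_m-\eta\mx|)$. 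Squaring (the factor $i^2=-1$ is exactly what produces the minus sign in (\ref{StructureImagingfunction})), summing over $j$ and then over $m$, and taking the modulus yields precisely (\ref{StructureImagingfunction}).

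The main obstacle is the middle step, namely justifying that the genuine left singular vectors of $\mathbb{M}$ can be replaced by the normalized columns $\tfrac1{\sqrt N}\mE_j(\mz_m;\omega)$. Two points need care: the cross-location contributions must be shown negligible, which is precisely where (\ref{Separated}) and the decay of $J_0,J_1$ enter; and within one location the three columns are orthogonal but \emph{not} of equal length ($|\mE_1|=\sqrt N$ while $|\mE_2|=|\mE_3|=\sqrt{N/2}$ in the large-$N$ limit), so the identification with an orthonormal singular basis is only approximate, valid up to $o(1)$ errors as $N\to\infty$ and the targets become well separated. Because $\sum_m\langle\mW,\mU_m\rangle^2$ (with no conjugation in the second factor) is sensitive to the choice of Takagi vectors, pinning these vectors to the specific columns $\tfrac1{\sqrt N}\mE_j$ is what fixes the constants in (\ref{StructureImagingfunction}); controlling the off-diagonal Gram entries through the first zeros of the Bessel functions, consistent with the constant $0.75$ in (\ref{Separated}), is the technically delicate part, while the remainder is the bookkeeping above.
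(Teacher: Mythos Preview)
Your proposal is correct and follows essentially the same route as the paper: replace the singular vectors by the (normalized) columns $\mE_s(\mz_m;\omega)$ of the factorization (\ref{Decomposition}), then evaluate each inner product $\langle\mW(\mx;\eta),\mE_s(\mz_m;\omega)\rangle$ via Lemma \ref{Bessel} and square. The paper simply asserts the replacement step in one line without the Takagi and Gram-matrix justification you supply, and it likewise ignores the $\sqrt{N/2}$ versus $\sqrt{N}$ normalization discrepancy you flag; so your argument is, if anything, a more careful version of the same proof.
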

\begin{proof}
  Since $N$ is sufficiently large and $\vv_j=-\vt_j$ for all $j$, by comparing (\ref{ED}) and (\ref{SVD}), we can observe that
  \[\mathbb{W}(\mx;\eta)=\left|\sum_{m=1}^{3M}\langle\mW(\mx;\eta),\mU_m\rangle\langle\mW(\mx;\eta),\overline{\mV}_m\rangle\right|\approx\left|\sum_{m=1}^{M}\sum_{s=1}^{3}\langle\mW(\mx;\eta),\mE_s(\mz_m;\omega)\rangle^2\right|.\]

  First, applying Lemma \ref{Bessel} yields
  \[\langle\mW(\mx;\eta),\mE_1(\mz_m;\omega)\rangle=\frac{1}{N}\sum_{n=1}^{N}e^{i\vt_n\cdot(\omega\mz_m-\eta\mx)}=\frac{1}{2\pi}\int_{\mathbb{S}^1}e^{i\vt\cdot(\omega\mz_m-\eta\mx)}d\vt=J_0(|\omega\mz_m-\eta\mx|).\]
  Thus
  \begin{equation}\label{term1}
  \langle\mW(\mx;\eta),\mE_1(\mz_m;\omega)\rangle^2=J_0(|\omega\mz_m-\eta\mx|)^2.
  \end{equation}
  Next, applying Lemma \ref{Bessel} again, we can evaluate
  \begin{align*}
    \langle\mW(\mx;\eta),\mE_2(\mz_m;\omega)\rangle&=\frac{1}{N}\sum_{n=1}^{N}(\vt_n\cdot\me_1)e^{i\vt_n\cdot(\omega\mz_m-\eta\mx)}=\frac{1}{2\pi}\int_{\mathbb{S}^1}(\vt\cdot\me_1)e^{i\vt\cdot(\omega\mz_m-\eta\mx)}d\vt\\
    &=i\left(\frac{\omega\mz_m-\eta\mx}{|\omega\mz_m-\eta\mx|}\cdot\me_1\right)J_1(|\omega\mz_m-\eta\mx|).
  \end{align*}
  Hence,
  \begin{equation}\label{term2}
    \langle\mW(\mx;\eta),\mE_2(\mz_m;\omega)\rangle^2=-\left(\frac{\omega\mz_m-\eta\mx}{|\omega\mz_m-\eta\mx|}\cdot\me_1\right)^2 J_1(|\omega\mz_m-\eta\mx|)^2.
  \end{equation}
  Similarly, we can get
  \begin{equation}\label{term3}
    \langle\mW(\mx;\eta),\mE_3(\mz_m;\omega)\rangle^2=-\left(\frac{\omega\mz_m-\eta\mx}{|\omega\mz_m-\eta\mx|}\cdot\me_2\right)^2 J_1(|\omega\mz_m-\eta\mx|)^2.
  \end{equation}

  By combining (\ref{term1}), (\ref{term2}), and (\ref{term3}), we can obtain
  \begin{multline*}
    \sum_{m=1}^{3M}\langle\mW(\mx;\eta),\mU_m\rangle\langle\mW(\mx;\eta),\overline{\mV}_m\rangle\\
    =\sum_{m=1}^{M}\left\{J_0(|\omega\mz_m-\eta\mx|)^2-\sum_{s=1}^{2}\left(\frac{\omega\mz_m-\eta\mx}{|\omega\mz_m-\eta\mx|}\cdot\me_s\right)^2J_1(|\omega\mz_m-\eta\mx|)^2\right\}.
  \end{multline*}
  Hence, (\ref{StructureImagingfunction}) can be derived. This completes the proof.
\end{proof}

\begin{rem}\label{Remark}
  From the relationship (\ref{StructureImagingfunction}), we can observe that since $J_0(x)$ has its maximum value $1$ at $x=0$ and $J_1(0)=0$, $\mathbb{W}(\mx;\eta)$ will plot the magnitude of $1$ at $\mx=(\omega/\eta)\mz_m$ instead of the true locations $\mz_m$. This is the reason the inexact locations of cracks are extracted via subspace migration when inaccurate value of applied frequency applied. Note that if $\eta=\omega$, one can extract true locations of $\Sigma_m$. Furthermore, if $\eta>\omega$, identified locations will be concentrated to the origin. Otherwise, if $\eta<\omega$, identified locations will be located far from the origin. See FIG. \ref{Locations}.
\end{rem}

\begin{figure}[h]
\begin{center}
  \includegraphics[width=0.5\textwidth]{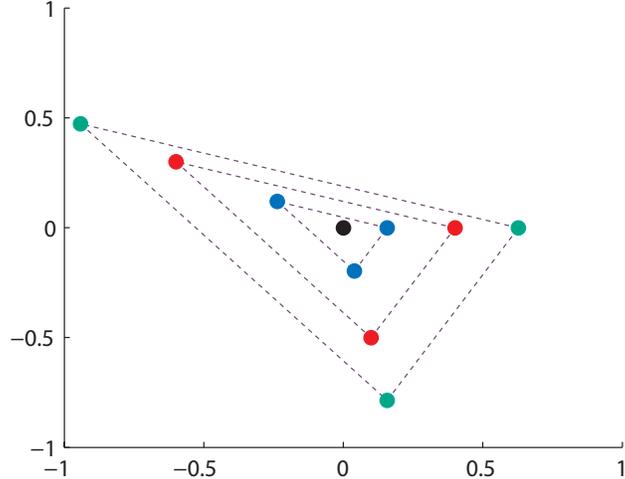}
  \caption{\label{Locations}Description of identified locations in Remark \ref{Remark}. Black-colored circle is origin, red-colored circles are true locations of $\Sigma_m$, blue-colored circles are identified locations of $\Sigma_m$ when $\eta>\omega$, and green-colored circles are identified locations of $\Sigma_m$ when $\eta<\omega$.}
\end{center}
\end{figure}

\section{Simulation results}\label{sec:5}
Some result of numerical simulations are exhibited in this Section in order to examine validation of Theorem \ref{TheoremFrequency}. For performing, we consider the detection of three different small inhomogeneities $\Sigma_m$. The radius $r$ of all $\Sigma_m$ are equally set to $0.05$, and parameters $\eps_0$ and $\mu_0$ are chosen as $1$. Locations $\mz_m$ are selected as $\mz_1=[0.4,0]^T$, $\mz_2=[-0.6,0.3]^T$, and $\mz_3=[0.1,-0.5]^T$. The incident directions $\vt_l$ are selected as
\[\vt_l=\left[\cos\frac{2\pi l}{N},\sin\frac{2\pi l}{N}\right]^T\quad\mbox{for}\quad l=1,2,\cdots,N.\]
In every examples, a white Gaussian noise with $20$ dB Signal-to-Noise Ratio (SNR) is added via the Matlab command \texttt{awgn} included in the signal processing package.

FIG. \ref{Result1} shows the maps of $\mathbb{W}(\mx;\eta)$ when MSR matrix is generated with $N=16$, $\lambda=0.4$, and $\eps_m=\mu_m=5$, $m=1,2,3$. As expected, although unexpected artifacts disturb identification, locations of $\Sigma_m$ are clearly identified for any value of $\eta$. Furthermore, on the basis of Theorem \ref{TheoremFrequency}, since locations of $(\omega/\eta)\mz_m$ are identified via $\mathbb{W}(\mx;\eta)$, extracted locations of $\Sigma_m$ are scattered when $\eta<\omega$ and concentrated when $\eta>\omega$. Notice that since the true value of $\omega$ is $\omega\approx15.7080$, very accurate locations of $\Sigma_m$ are identified via the map of $\mathbb{W}(\mx;15)$.

\begin{figure}[h]
\includegraphics[width=0.495\textwidth]{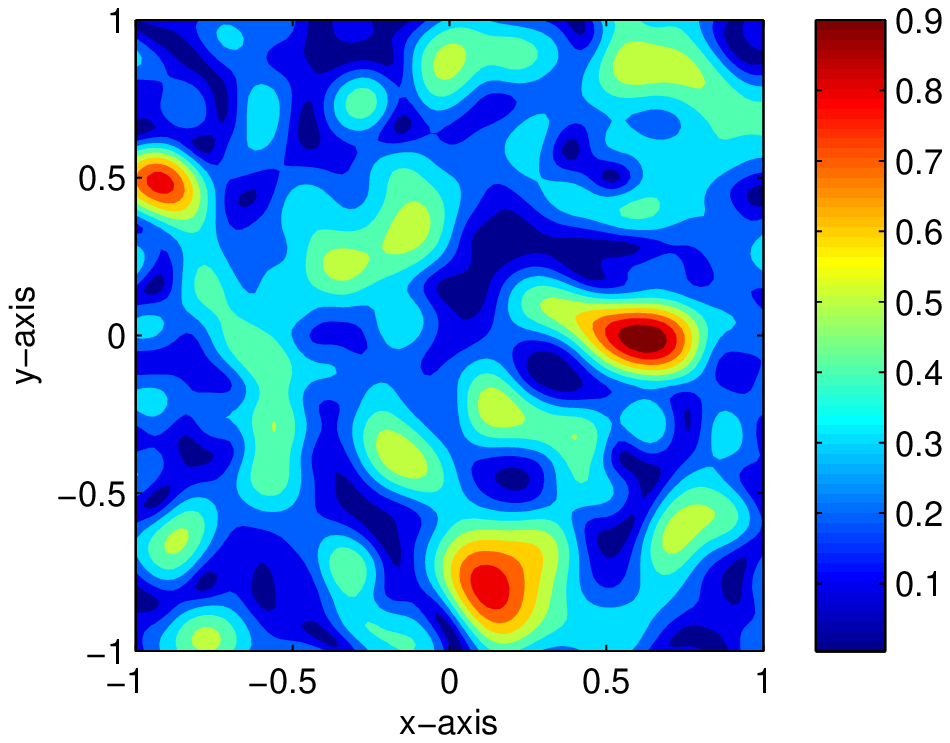}
\includegraphics[width=0.495\textwidth]{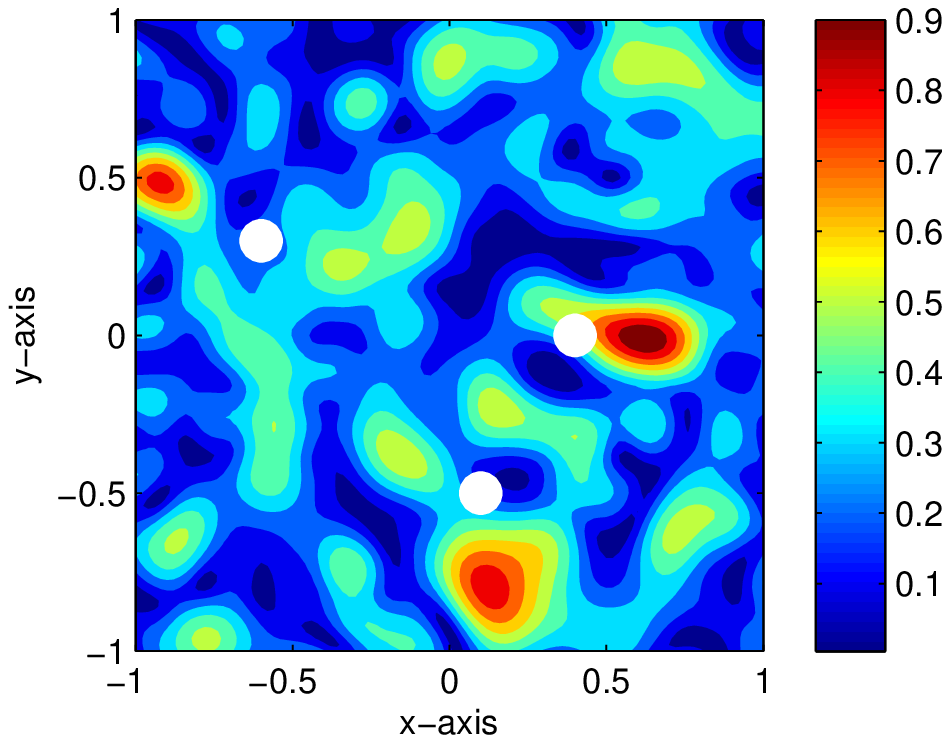}\\
\includegraphics[width=0.495\textwidth]{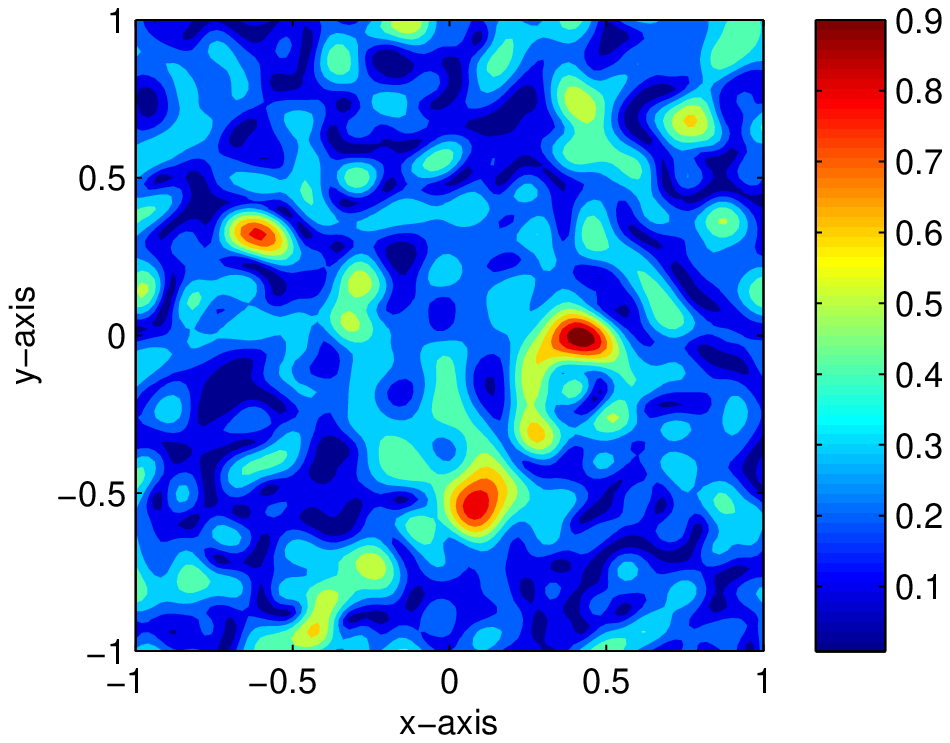}
\includegraphics[width=0.495\textwidth]{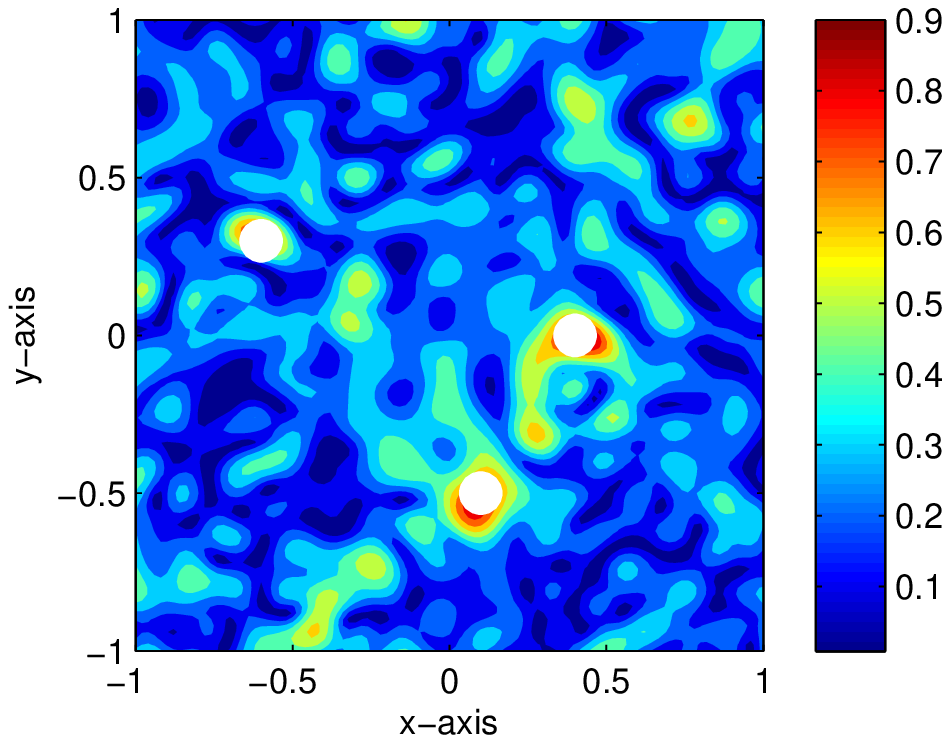}\\
\includegraphics[width=0.495\textwidth]{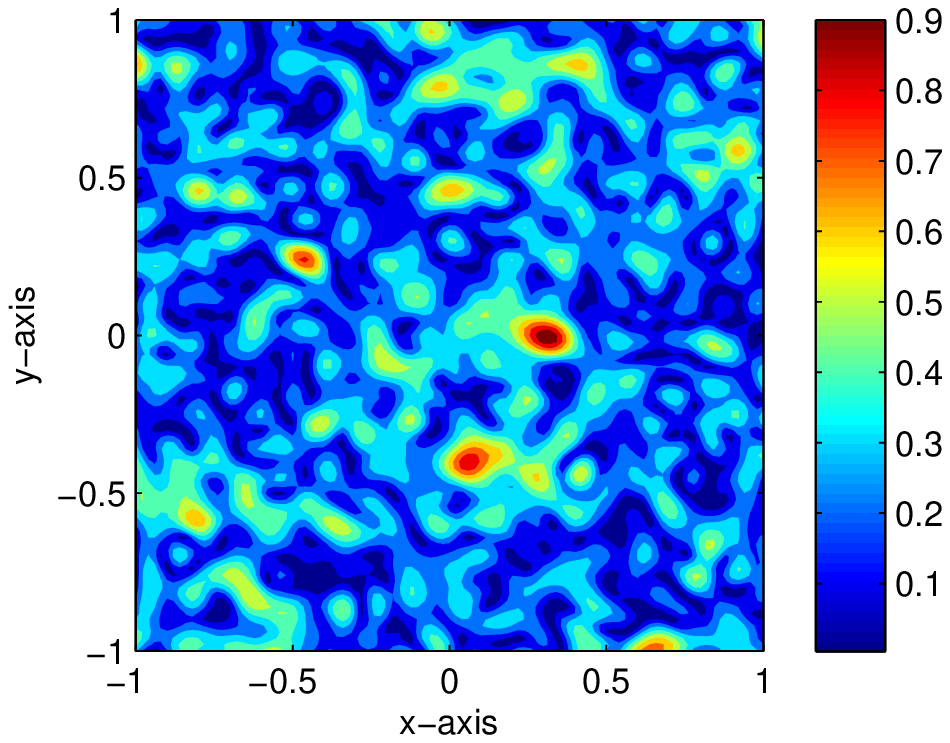}
\includegraphics[width=0.495\textwidth]{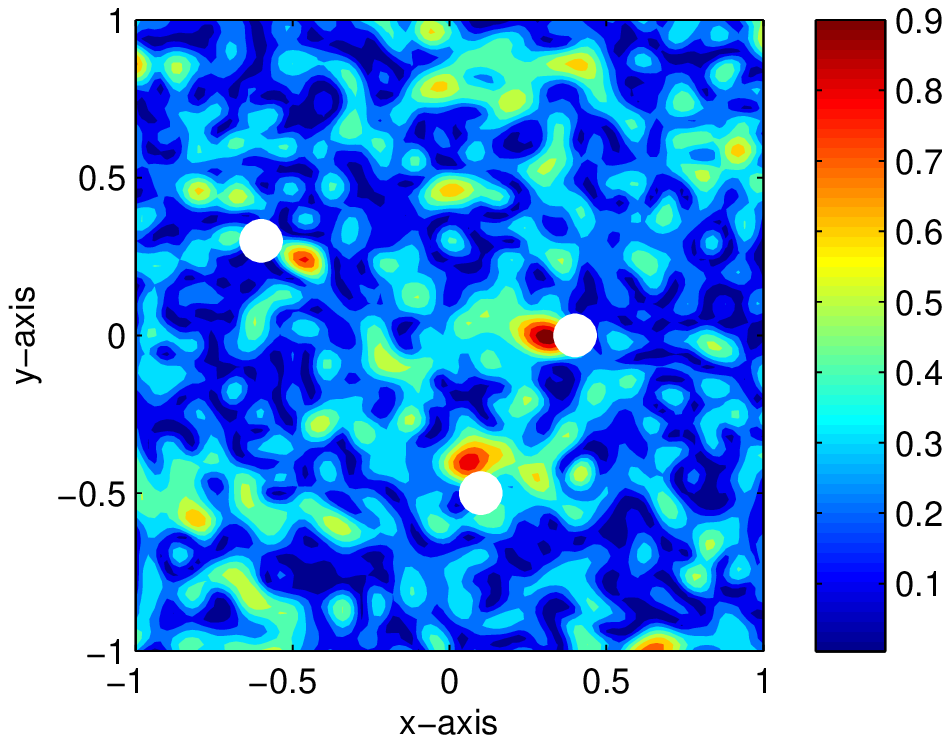}
\caption{\label{Result1}Maps of $\mathbb{W}(\mx;\eta)$ for $\eta=10$ (top, left), $\eta=15$ (middle, left), and $\eta=20$ (bottom, right) when $\omega=2\pi/0.4$. White-colored circles in the maps of $\mathbb{W}(\mx;\eta)$ are true locations of $\Sigma_m$ (right column).}
\end{figure}

FIG. \ref{Result2} shows the maps of $\mathbb{W}(\mx;\eta)$ when MSR matrix is generated with $N=16$, $\lambda=0.2$, and different material properties $\eps_1=\mu_1=5$, $\eps_2=\mu_2=2$, and $\eps_3=\mu_3=7$. Similar to the results in FIG. \ref{Result1}, we can recognize the existence of $\Sigma_m$ but huge amount of artifacts impede identification. Note that since the true value of $\omega$ is $\omega\approx31.4160$, locations of $\Sigma_m$ can be identified accurately via the map of $\mathbb{W}(\mx;30)$. However, we have no \textit{a priori} information of true value of $\omega$, locations of $\Sigma_m$ cannot be identified exactly at this stage.

\begin{figure}[h]
\includegraphics[width=0.495\textwidth]{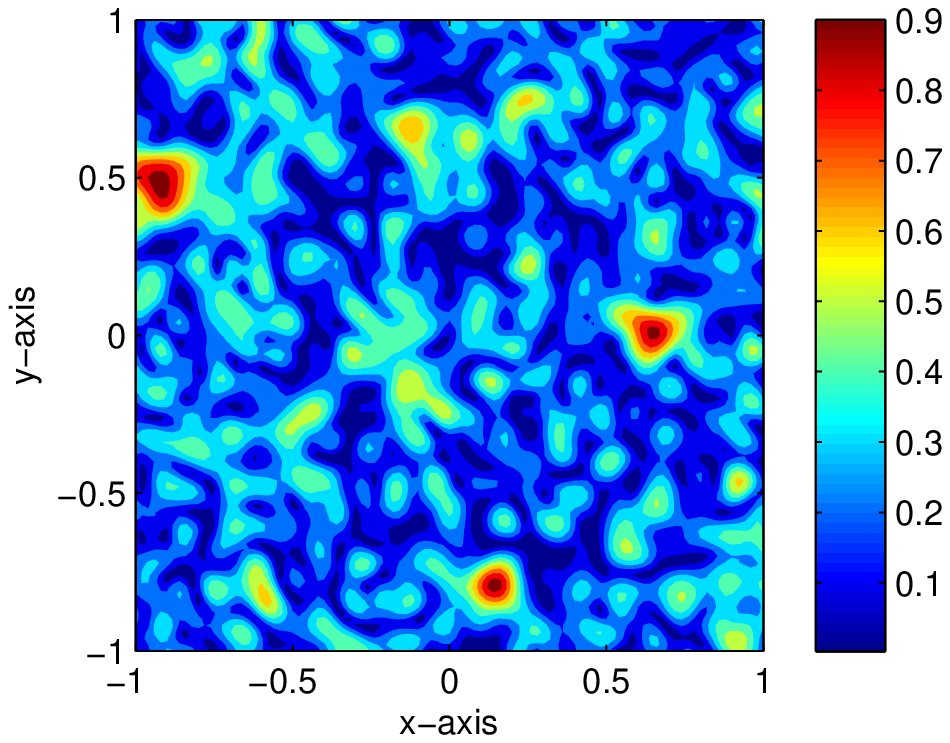}
\includegraphics[width=0.495\textwidth]{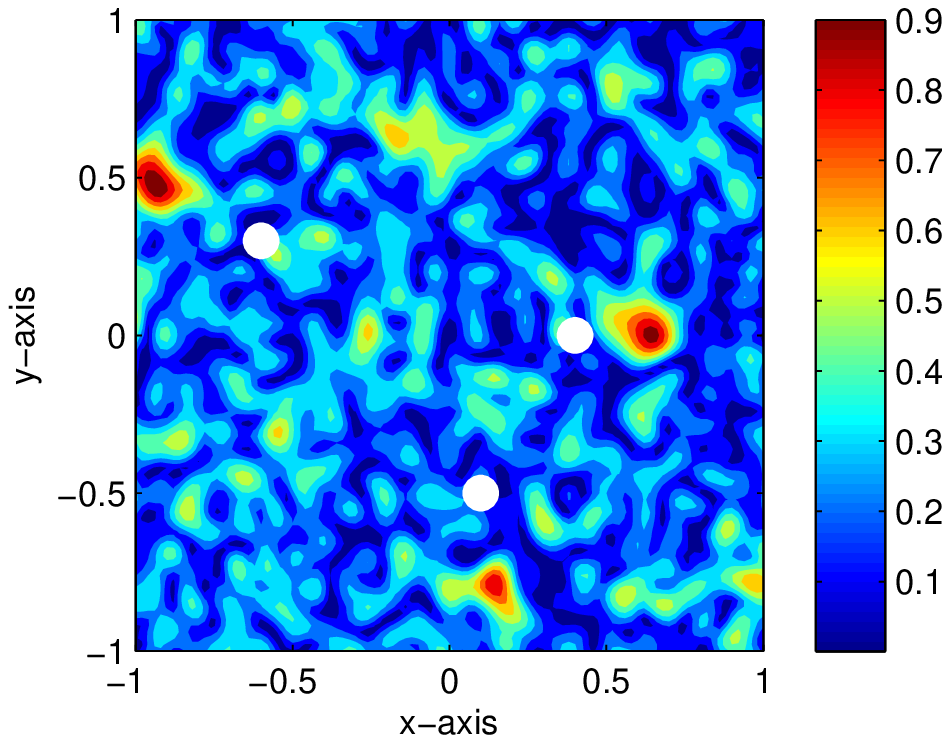}\\
\includegraphics[width=0.495\textwidth]{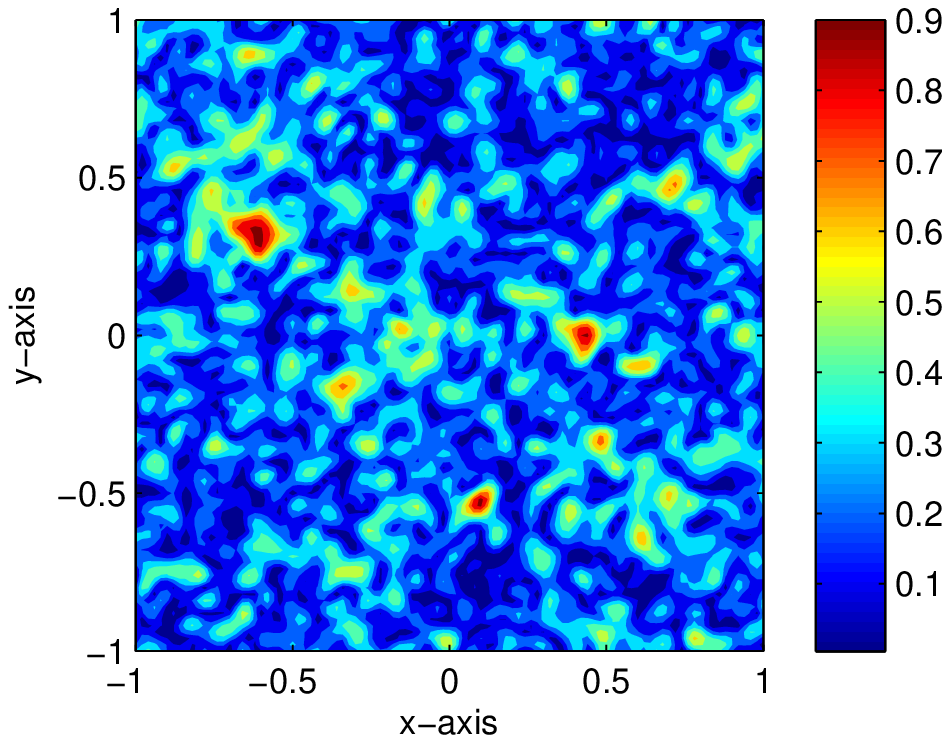}
\includegraphics[width=0.495\textwidth]{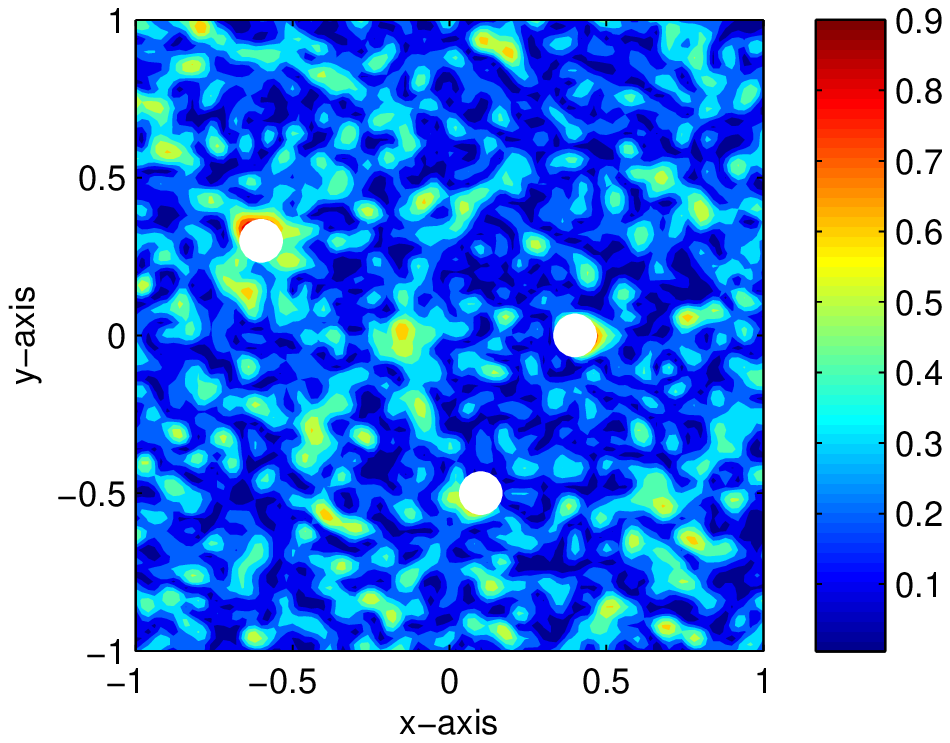}\\
\includegraphics[width=0.495\textwidth]{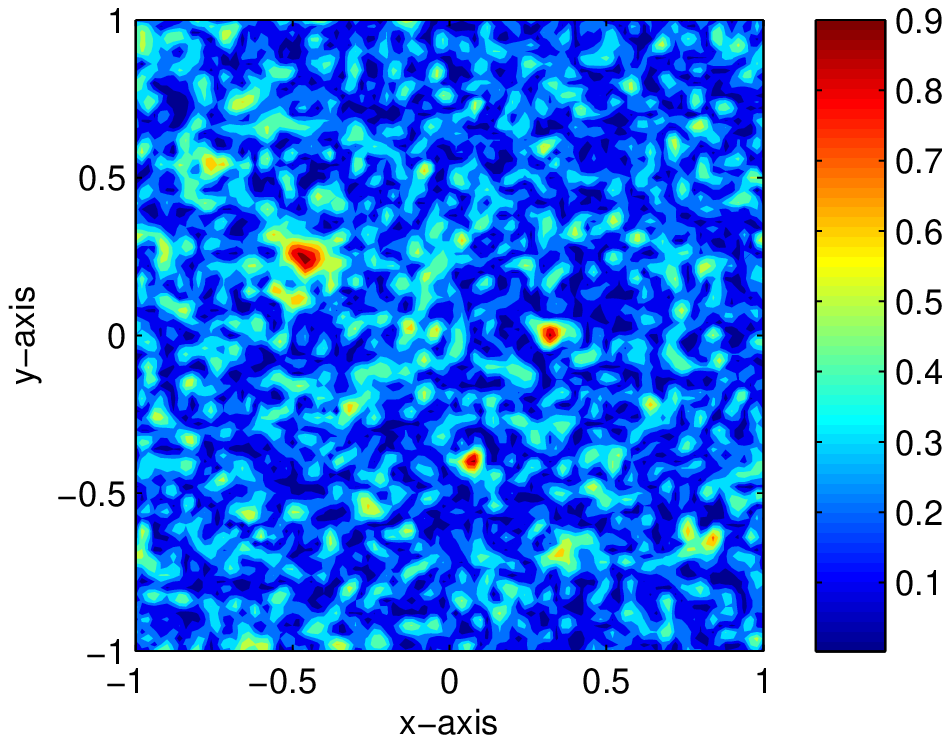}
\includegraphics[width=0.495\textwidth]{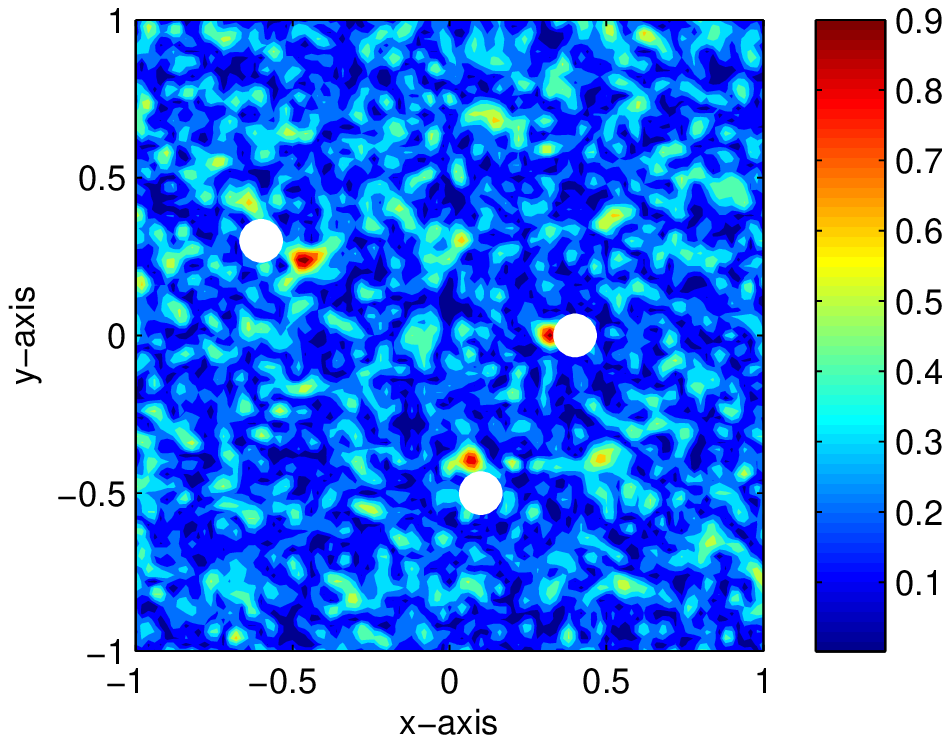}
\caption{\label{Result2}Maps of $\mathbb{W}(\mx;\eta)$ for $\eta=20$ (top, left), $\eta=30$ (middle, left), and $\eta=40$ (bottom, left) when $\omega=2\pi/0.2$. White-colored circles in the maps of $\mathbb{W}(\mx;\eta)$ are true locations of $\Sigma_m$ (right column).}
\end{figure}

\section{Conclusion}\label{sec:6}
Based on the asymptotic expansion formula of far-field pattern in the existence of small electromagnetic inhomogeneities, the structure of subspace migration imaging function is investigated when the applied frequency is inexact to the true one. Based on the relationship with Bessel functions of order zero and one of the first kind, we have confirmed the reason as to why the locations of inhomogeneities identified inaccurately.

In this paper, we considered the detection of inhomogeneities in the full-view inverse scattering problem. Based on the difficulties from \cite{PL2,P-SUB5,AIL1}, extension to the limited-view or half-space problem will be an remarkable research subject. Furthermore, we expect it can be extended to the various inverse scattering problem in three-dimension.

\begin{acknowledgments}
This research was supported by the Basic Science Research Program through the National Research Foundation of Korea (NRF), funded by the Ministry of Education(No. NRF-2014R1A1A2055225), and the research program of Kookmin University in Korea.
\end{acknowledgments}


\begin{references}
\bibitem{A3} S. R. Arridge, Inverse Problems {\bf 15}, R41 (1999).1999.
\bibitem{ABF}H. Ammari, G. Bao and J. Flemming, SIAM J. Appl. Math. {\bf 62} 1369 (2002).
\bibitem{FKM}A. S. Fokas, Y. Kurylev and V. Marinakis, Inverse Problems {\bf 20} 1067 (2004).
\bibitem{KDAK} Y. T. Kim, I. Doh, B. Ahn and K. Y. Kim, J. Korean Soc. Nondestruc. Test. {\bf 35} 128 (2015).
\bibitem{SKLKLJC}S.-H. Son, H.-J. Kim, K.-J. Lee, J.-Y. Kim, J.-M. Lee, S.-I. Jeon and H.-D. Choi, J. Electromagn. Eng. Sci. {\bf 15} 250 (2015).
\bibitem{AK2}H. Ammari and H. Kang, {\em Reconstruction of Small Inhomogeneities from Boundary Measurements} (Springer-Verlag, Berlin, 2004).
\bibitem{BL}G. Bao and P. Li, Inverse Problems {\bf 20} L1 (2004).
\bibitem{BK}P. M. van den Berg and R. E. Kleinman, Inverse Problems {\bf 11} L5 (1995).
\bibitem{B3}M. Burger, Inverse Problems {\bf 17} 1327 (2001).
\bibitem{CGHIR}R. R. Coifman, M. Goldberg, T. Hrycak, M. Israel and V. Rokhlin, Waves Random Media {\bf 9} 441 (1999).
\bibitem{DL}O. Dorn and D. Lesselier, Inverse Problems {\bf 22} R67 (2006).
\bibitem{GK}S. Gutman and M. V. Klibanov, Inverse Problems {\bf 10} 573 (1994).
\bibitem{IW}V. Isakov and S. F. Wu, Inverse Problems {\bf 18} 1147 (2002).
\bibitem{AGKPS}H. Ammari, J. Garnier, H. Kang, W.-K. Park and K. S{\o}lna, SIAM J. Appl. Math. {\bf 71} 68 (2011).
\bibitem{P-SUB3}W.-K. Park, J. Comput. Phys. {\bf 283} 52 (2015).
\bibitem{PL2}W.-K. Park and D. Lesselier, Waves Random Complex Media {\bf 22} 3 (2012).
\bibitem{LVB}C.T. Lanc{\'e}e, J. M. Vissers and N. Bom, Ultrasonics {\bf 26}, 37 (1988).
\bibitem{SRACM}R. Solimene, G. Ruvio, A. Dell'Aversano, A. Cuccaro, M. J. Ammann and R. Pierri, Prog. Electromagn. Res. B {\bf 50} 347 (2013).
\bibitem{P-SUB5}W.-K. Park, Inverse Problems {\bf 26} 074008 (2010).
\bibitem{AIL1}H. Ammari, E. Iakovleva and D. Lesselier, Multiscale Model. Simul. {\bf 3} 597 (2005).
\end{references}
\end{document}